\documentclass[11pt,a4paper]{article}
\usepackage{tikz}
\usepackage{subfigure}
\usepackage[english]{babel}
\usepackage{graphicx}
\usepackage{extarrows}

\usepackage{indentfirst, latexsym, bm, amssymb}

\setlength{\textwidth}{150mm} \setlength{\oddsidemargin}{7mm}
\setlength{\evensidemargin}{7mm} \setlength{\topmargin}{-5mm}
\setlength{\textheight}{245mm} \topmargin -18mm

\begin{document}

\newtheorem{theorem}{Theorem}[section]
\newtheorem{corollary}[theorem]{Corollary}
\newtheorem{conjecture}[theorem]{Conjecture}
\newtheorem{question}[theorem]{Question}
\newtheorem{lemma}[theorem]{Lemma}
\newtheorem{proposition}[theorem]{Proposition}
\newtheorem{example}[theorem]{Example}
\newenvironment{proof}{\noindent {\bf
Proof.}}{\rule{3mm}{3mm}\par\medskip}
\newcommand{\remark}{\medskip\par\noindent {\bf Remark.~~}}
\newcommand{\pp}{{\it p.}}
\newcommand{\de}{\em}

\title{Extremal graphs of the $k$-th power of paths}
\author{ Long-Tu Yuan  \\
{\small School of Mathematical Sciences}\\
{\small East China Normal University}\\
{\small 500 Dongchuan Road, Shanghai, 200240, P.R. China  }\\
{\small Email: ltyuan@math.ecnu.edu.cn}\\
}
\date{}
\maketitle
\begin{abstract}
An extremal graph for a given graph $H$ is a graph with maximum number of edges on fixed number of vertices without containing a copy of $H$. The $k$-th power of a path is a graph obtained from a path and joining all pair of vertices of the path with distance less than $k$. Applying a deep theorem of Simonovits, we characterize the extremal graphs of the $k$-th power of paths. This settles a conjecture posed by Xiao, Katona, Xiao and Zamora in a stronger form.
\end{abstract}

{{\bf Key words:} Path; $k$-th power of graphs; Tur\'{a}n number.}

{{\bf AMS Classifications:} 05C35.}
\vskip 0.5cm

\section{Introduction}
The Tur\'{a}n number of a given graph $H$, ex$(n,H)$, is the maximum number of edges of a graph on $n$ vertices without containing a copy of $H$. Determining the Tur\'{a}n number of a fixed graph becomes one of the most important problems in extremal graph theory after Mantel determined the Tur\'{a}n number of triangle. The celebrated Erdos-Stone-Siminovits Theorem states that \begin{equation}\label{E-S-S}
\mbox{ex}(n,H)=\left(1-\frac{1}{\chi(H)-1}\right){n \choose 2} +o(n^2),
\end{equation}
where $\chi(H)$ is the chromatic number of $H$. Hence the Tur\'{a}n number of a graph $H$ with $\chi(H)\geq 3$ is approximately  determined.  Determining exact value of ex$(n,H)$  is still interesting.
s
Denote by $P_k$ the path on $k$ vertices. The $p$-th power of a path $P_{k}$ on $k$ vertices, denoted by $P_{k}^p$, is the graph obtained from $P_{k}$ and joining each pair of vertices of $P_{k}$ with distance less than $p$. For the $2$-th power of a path, recently, Xiao, Katona, Xiao and Zamora \cite{Xiao} posed the following conjecture.

\begin{conjecture}(Xiao, Katona, Xiao and Zamora \label{conjecture}\cite{Xiao})
$$\emph{ex}(n,P_k^2)\leq \max\left\{\frac{n_0\left(\lfloor\frac{2k}{3}\rfloor-2\right)}{2}+n_0n_1:n_0+n_1=n\right\}.$$
\end{conjecture}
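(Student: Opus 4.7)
The natural approach is to invoke Simonovits' structure theorem (the ``deep theorem'' referred to in the abstract) for $3$-chromatic forbidden subgraphs; this applies because $\chi(P_k^2)=3$ (the map $v\mapsto v\bmod 3$ is a proper colouring of $P_k^2$). In one standard form, this theorem states that for all sufficiently large $n$, every extremal $P_k^2$-free graph arises from the Tur\'an graph $T_2(n)$ with parts $V_0,V_1$ by adding, inside one of the parts, a graph $L$ that is free of every member of the \emph{decomposition family} $\mathcal{F}(P_k^2)$, whose members are the induced subgraphs $P_k^2[V(P_k^2)\setminus I]$ with $I$ ranging over the independent sets of $P_k^2$. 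The conjectured upper bound will then follow by identifying the relevant minimal elements of $\mathcal{F}(P_k^2)$ and applying the classical Erd\H{o}s--Gallai bound for paths.

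The key identification is that $P_{\lfloor 2k/3\rfloor}$ is a minimal element of $\mathcal{F}(P_k^2)$. Indeed, a maximum independent set $I$ of $P_k^2$ consists of vertices of $P_k$ with pairwise distance at least $3$, so $|I|=\lceil k/3\rceil$ and $|V(P_k^2)\setminus I|=\lfloor 2k/3\rfloor$. Listing the vertices of $V(P_k^2)\setminus I$ in the order they appear along $P_k$, two consecutive such vertices are at distance $1$ or $2$ in $P_k$ (otherwise the vertex between them could be added to $I$, contradicting maximality) and are therefore adjacent in $P_k^2$. This exhibits a Hamilton path in $P_k^2[V\setminus I]$, i.e.~a copy of $P_{\lfloor 2k/3\rfloor}$; taking $I=\{1,4,7,\ldots\}$ realises this path exactly, so $P_{\lfloor 2k/3\rfloor}\in\mathcal{F}(P_k^2)$.

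Consequently any $\mathcal{F}(P_k^2)$-free graph is in particular $P_{\lfloor 2k/3\rfloor}$-free, and by Erd\H{o}s--Gallai has at most $\frac{(\lfloor 2k/3\rfloor-2)n_0}{2}$ edges on a part of size $n_0$. Combining this with the $n_0n_1$ edges of the complete bipartite graph and optimising over $n_0+n_1=n$ yields the bound of the conjecture. For the stronger characterization of extremal graphs promised in the abstract, I would additionally argue that in the extremal case the added graph $L$ consists essentially of disjoint copies of $K_{\lfloor 2k/3\rfloor-1}$ placed on one part, and verify directly that the resulting graph is indeed $P_k^2$-free.

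The main obstacle is the \emph{one-sided} structure that must be extracted from Simonovits' theorem. A priori, one might hope to place a $P_{\lfloor 2k/3\rfloor}$-free graph inside \emph{both} parts of the bipartition, which would produce strictly more edges. The case $k=6$ already shows why this fails: the bipartition $(\{1,3,5\},\{2,4,6\})$ of $V(P_6^2)$ induces a $P_3$ on each side, so $K_{n_0,n_1}$ with a $P_3$ added inside each part already contains $P_6^2$, even though $P_3$ is itself $P_4$-free. Reducing the general situation to one-sided additions, which is exactly what the asymmetric form of the conjectured bound demands, is the delicate step that most heavily relies on the full strength of the Simonovits structure result.
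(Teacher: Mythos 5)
Your overall strategy coincides with the paper's: identify the decomposition family as $\mathcal{M}(P_k^2)=\{P_{\lfloor 2k/3\rfloor}\}$ (your derivation via maximum independent sets of $P_k^2$ is correct and matches Proposition~\ref{decompositionofpowerpath} for $p=2$, in rather more detail than the paper gives), invoke Simonovits' symmetric-structure theorem, and finish with the Erd\H{o}s--Gallai bound on one class. However, there is a genuine gap at the decisive step. The version of the structure theorem you quote --- that every extremal graph is $T(n,2)$ plus a graph free of the decomposition family inside \emph{one} part --- is not what Simonovits proved; it is essentially the conclusion that has to be established. What the theorem actually gives (Theorems~\ref{extremal-in-D(npr)} and \ref{family-extremal-graphs}) is: after deleting a set $D$ of at most $r$ exceptional vertices, the remainder is a product $G^1\otimes G^2$ in which each $G^i$ splits into small pairwise symmetric components. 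From this one must still rule out (i) edges appearing in both classes simultaneously, and (ii) bad interactions with the exceptional vertices of $D$. You correctly flag (i) as the delicate step and give the instructive $k=6$ example, but you supply no argument for it, and the structure theorem alone does not supply one either.

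Point (ii) is not a technicality: in the actual extremal graphs exceptional vertices genuinely occur. By Theorems~\ref{extremal for path} and \ref{extremal for connected}, the $P_{\lfloor 2k/3\rfloor}$-free part can be (and for some residues of $n_0$ must be) of the form $H(n_0,s-1,t-1)$, which contains $t-1$ vertices joined to almost everything; these land in $D$ and are adjacent to linearly many vertices of \emph{both} classes, so they cannot be absorbed into a clean bipartition. The paper's proof spends most of its effort exactly here: it needs a stability lemma for $P_s$-free graphs (Lemma~\ref{stablity for non-connecte}, built on F\"uredi--Kostochka--Verstra\"ete) to show the dense class contains either $\ell K_{s-1}$ or $K_{t-1,\ell}$ with $\ell=\Theta(n)$, and then the embedding Lemmas~\ref{no-two-matching} and \ref{no-edge-in-G2} (especially part (b), which weaves paths through the exceptional vertices and splits into cases according to the parity of $s$) to derive a contradiction from any edge in the second class or any stray edge meeting the common neighbourhood of the exceptional vertices. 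None of this machinery appears in your sketch, so as written the proposal correctly identifies the target structure and the extremal count of the dense part, but does not prove the reduction to that structure.
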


For the history of the Tur\'{a}n numbers of the $2$-th power of paths, we refer the interested readers to \cite{Xiao}. The $p$-partite Tur\'{a}n graph
 on $n$ vertices, denoted by $T(n,p)$, is the complete $p$-partite graph satisfying that the differ of the sizes of classes is at most one.  Denoted by $t(n,p)$ the number of edges of $T(n,p)$. We will establish the following theorem which confirms Conjecture~\ref{conjecture} in a stronger form.

\begin{theorem}\label{main}
Let $n$ be sufficiently large and  $f(n,k)=\emph{ex}(n,P_k)$. Then
$$\emph{ex}(n,P_k^p)=\max\left\{f\left(n_0,2\left\lfloor\frac{k}{p+1}\right\rfloor+j\right)+n_0t(n_1,p):n_0+n_1=n\right\},$$
where $j=1$ if $k=p$ mod $p+1$ and $j=0$ otherwise. Moreover, all extremal graphs are characterized.
\end{theorem}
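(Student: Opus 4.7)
My plan is to prove Theorem~\ref{main} in three conceptual steps: a construction giving the lower bound, a structural reduction using a deep theorem of Simonovits for the upper bound, and an optimization/characterization step.

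Setting $\ell = 2\lfloor k/(p+1)\rfloor + j$, for each pair $(n_0,n_1)$ with $n_0+n_1=n$ I construct $G^*(n_0)$ by joining a maximum $P_\ell$-free graph $H_0$ on $n_0$ vertices to the $(p-1)$-class Tur\'an graph $T(n_1,p-1)$, and I verify $G^*(n_0)$ is $P_k^p$-free as follows. Suppose $\phi:P_k^p\hookrightarrow G^*(n_0)$; write $A$ for the vertices of $P_k^p$ mapped into $V(H_0)$ and $B_1,\dots,B_{p-1}$ for those mapped into the parts of $T(n_1,p-1)$. Each $B_r$ is independent in $P_k^p$, so its elements lie at pairwise index-distance $\ge p+1$ along $v_1,\dots,v_k$. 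Under the canonical $(p+1)$-coloring $v_i\mapsto i\bmod(p+1)$, each $B_r$ is contained in a single color class, so $A$ must absorb at least two complete color classes. A short index computation shows that the union of any two color classes of $P_k^p$ induces a Hamilton path of length $\ell$, producing $P_\ell\subseteq P_k^p[A]\subseteq H_0$, a contradiction.

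For the upper bound, let $G$ be an extremal $P_k^p$-free graph. The Erd\H{o}s--Stone--Simonovits theorem~\eqref{E-S-S} gives $e(G)=(1-1/p+o(1))\binom{n}{2}$, and Simonovits's stability theorem then implies that $G$ is $o(n^2)$-close in edit distance to $T(n,p)$. I then apply the deep structural theorem of Simonovits alluded to in the abstract: when $\chi(H)=p+1$, every extremal $H$-free graph on $n$ sufficiently large vertices is obtained from $T(n,p)$ by modifying a single color class $U_0$ into a graph $H_0$ that is free of the decomposition family $\mathcal{L}(H)=\{H[A]:H[V(H)\setminus A]\mbox{ is }(p-1)\mbox{-colorable}\}$. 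The construction above already witnesses $P_\ell\in\mathcal{L}(P_k^p)$, and an extension of the same Hamilton-path argument to every valid $A$ shows that every member of $\mathcal{L}(P_k^p)$ contains $P_\ell$ as a subgraph; hence $\mathrm{ex}(m,\mathcal{L}(P_k^p))=f(m,\ell)$, so the modified class must carry a $P_\ell$-free graph, and $e(G)\le f(|U_0|,\ell)+|U_0|(n-|U_0|)+t(n-|U_0|,p-1)$.

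Maximizing the right-hand side over the size of $U_0$ recovers the formula in the theorem, and the extremal graphs are then exactly the $G^*(n_0)$ with optimal $n_0$ and with $H_0$ the canonical extremal $P_\ell$-free graph (essentially disjoint copies of $K_{\ell-1}$ plus a smaller remainder component, per the classical results of Faudree--Schelp and Kopylov). I expect the main obstacle to be the clean application of Simonovits's theorem here: I must rule out candidate extremal structures in which two or more Tur\'an classes are simultaneously modified. I plan to address this by a symmetrization argument that pushes all class-internal edges into a single class without losing any total edges, combined with a direct comparison confirming that no other member of $\mathcal{L}(P_k^p)$ yields a smaller extremal number than $P_\ell$, so that $P_\ell$ is the binding constraint on the modified class.
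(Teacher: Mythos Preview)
Your lower-bound argument contains a genuine error: the claim ``each $B_r$ is contained in a single color class'' is false. With $p=2$ the set $\{v_1,v_5\}$ is independent in $P_k^2$ but $1\not\equiv 5\pmod 3$. The conclusion that $G^*(n_0)$ is $P_k^p$-free is correct, but the reason is Proposition~\ref{decompositionofpowerpath}: one shows that if $A=\phi^{-1}(V(H_0))$ then every window of $p+1$ consecutive indices meets $A$ in at least two points (since a $K_{p+1}$ cannot be embedded in $p-1$ independent Tur\'an parts), hence consecutive elements of $A$ are at index-distance $\le p$ and $P_k^p[A]$ is a path of length $|A|\ge s$. Your ``two complete color classes'' shortcut does not work.

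The more serious gap is in the upper bound. The form of Simonovits's theorem you invoke --- that the extremal graph is $T(n,p)$ with a \emph{single} class modified --- is not what Theorems~\ref{extremal-in-D(npr)} and~\ref{family-extremal-graphs} say. What Simonovits actually gives is a set $D$ of at most $r$ exceptional vertices after whose deletion the graph is $\bigotimes_{i=1}^p G^i$, where \emph{every} $G^i$ may carry edges (subject to the bounded-component symmetry condition). Reducing this to ``one class modified, no exceptional vertices'' is precisely the content of the paper's proof: Lemma~\ref{no-two-matching} shows two classes with large matchings already force $P_k^p$; Lemma~\ref{stablity for non-connecte} (a stability result for $P_s$-free graphs, not mentioned in your plan) forces the heavy class to contain many copies of $K_{s-1}$ or a large $K_{t-1,\ell}$; and Lemma~\ref{no-edge-in-G2} then kills any stray edge in another class. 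The exceptional set $D$ is absorbed via Lemma~\ref{minmum degree} and the symmetry condition. Your proposed ``symmetrization argument that pushes all class-internal edges into a single class'' does not do this work: Zykov-type symmetrization preserves clique-freeness but not $P_k^p$-freeness, and there is no obvious edge-moving operation between classes that is simultaneously edge-nondecreasing and $P_k^p$-free-preserving. You would need to supply the embedding lemmas (or equivalents) and deal with $D$ explicitly.
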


\section{Preliminary}

\subsection{Notation}
We use standard notation of graph theory. Given a graph $G$, we use $V(G)$ to denote the vertex of $G$ and $E(G)$ to denote the edge set of $G$. We use $\overline{G}$ to denote the complement of $G$. Denote by $G_1\cup G_2$ the vertex disjoint union of two graphs $G_1$ and $G_2$. Denote by $G_1 \otimes G_2$ the graph obtained from $G_1\cup G_2$ and joining each vertex of $G_1$ to each vertex of $G_2$. For a given graph $G$, denote by $kG$ the vertex disjoint union of $k$ copies of $G$. Denote by $K_n$ and $K_{n_1,n_2}$ the complete graph on $n$ and the complete bipartite graph with class sizes $n_1$, $n_2$ respectively. For $X\subseteq V(G)$, we use $G[X]$ to denote the induced subgraph of $G$ on vertex set $X$. We often refer to a path on $k$ vertices by the nature sequence of its vertices, writing, say, $P=x_1x_2\ldots x_k$. We also use $x_1Px_k$, $x_1P$ or  $Px_k$ to denote this path. Given two paths $P=x_1x_2\ldots x_s$ and $P^\ast=y_1y_2\ldots y_t$, we use $x_1Px_sy_1P^\ast y_t$ to denote the path obtained by adding the edge $x_sy_1$ to $P\cup P^\ast$.  Let $x$ be any vertex of $G$, the {\it neighborhood} of $x$ in $G$ is denoted by $N_G(x)=\{y\in V(G):(x,y)\in E(G)\}$. The {\it degree} of $x$ in $G$, denoted by $d_{G}(x)$, is the size of $N_G(x)$. We use $\delta(G)$ to denote the minimum degree of $G$.

\subsection{Extremal and stability results for paths}
We introduce the extremal results for paths first. Erd\H{o}s and Gallai \cite{erdHos1959maximal} showed that the Tur\'{a}n number of path on $k$ vertices is bounded by $\frac{k-2}{2}n$. Moreover, the bound is achieved by the vertex disjoint union of copies of clique on $k-1$ vertices. Later, Faudree and Schelp \cite{Faudree1975} and Kopylov \cite{Kopylov1977} independently proved the following theorem.

\begin{theorem}(Faudree and Schelp \cite{Faudree1975}, Kopylov \cite{Kopylov1977}).\label{extremal for path}
 Let $t\geq 1$ and $0\leq r\leq k-2$. Let $G$ be a graph on $n=(k-1)t+r$ vertices. Each extremal graph for $P_k$ is isomorphic to either $$
 tK_{k-1}\cup K_{r}$$ or $$
 (t-s-1)K_{k-1}\cup\left(K_{\frac{k-2}{2}}\otimes\overline{K}_{\frac{k}{2}+s(k-1)+r}\right)$$ for some $s\in[0,t-1]$ when $k$ is even and $r\in\{\frac{k}{2}, \frac{k-2}{2}\}$.
\end{theorem}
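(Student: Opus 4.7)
The plan is to prove this via a reduction to the 2-connected case using block decomposition, together with a Kopylov-style rotation-extension argument on each 2-connected piece.

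First, I would apply the block decomposition of $G$. Any path in $G$ traverses each block at most once (entering and leaving through a cut-vertex at most once each), so the $P_k$-freeness of $G$ can be encoded blockwise. Maximizing $e(G)=\sum_i e(B_i)$ subject to the global vertex constraint then reduces to understanding the extremal $P_k$-free 2-connected graphs and gluing them along cut-vertices.

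Second, I would establish a Kopylov-type bound for a 2-connected $P_k$-free graph $B$ on $m\geq k$ vertices via the rotation-extension method. If $P=v_1v_2\cdots v_\ell$ is a longest path in $B$ with $\ell<k$, then both endpoints have all of their neighbors on $P$; the set of endpoints reachable by P\'osa-style rotations is large, and comparing it to the neighborhoods of $v_1$ and $v_\ell$ yields strong structural constraints on $B$. This analysis produces the expected dichotomy: either $B$ is essentially $K_{k-1}$ with a few low-degree vertices attached, or $B$ has a ``core'' of roughly $\lfloor (k-1)/2\rfloor$ universal vertices joined to an independent set. These are exactly the two building blocks that appear in the statement.

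Third, I would combine everything via a global optimization. Writing $n=(k-1)t+r$ with $0\leq r\leq k-2$, the per-vertex density of $K_{k-1}$ matches the Erd\H{o}s-Gallai bound $(k-2)/2$, so using $t$ copies of $K_{k-1}$ together with some remainder on the last $r$ vertices is optimal. When $r$ is generic, the remainder is simply $K_r$, giving the first family $tK_{k-1}\cup K_r$. When $k$ is even and $r\in\{k/2,(k-2)/2\}$, the second Kopylov-extremal structure $K_{(k-2)/2}\otimes \overline{K}_{k/2+s(k-1)+r}$ achieves the same edge count while absorbing $s$ of the clique factors, producing the second family in the theorem.

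The main obstacle will be the equality analysis. Pinning down that no other graph achieves the maximum requires carefully tracing every inequality in the rotation-extension argument to its equality case, and then verifying that the resulting blockwise structures can only be assembled in the two ways listed. The tie in edge count between the two families in the even case, together with the freedom in the parameter $s$, explains why the characterization is a one-parameter family of non-isomorphic extremal graphs rather than a single graph.
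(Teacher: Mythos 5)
The paper does not prove this statement at all: it is quoted verbatim from Faudree--Schelp and Kopylov as Theorem~\ref{extremal for path}, so there is no internal proof to compare against and your proposal has to stand on its own.

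On its own terms, the proposal has a genuine gap at the very first step. The claim that ``the $P_k$-freeness of $G$ can be encoded blockwise'' is false: a path enters and leaves each block at most once, but it can thread through an entire chain of blocks via successive cut-vertices, so forbidding $P_k$ is a constraint on the total number of vertices a path can collect along a path in the block tree, not a per-block constraint. Concretely, take $t$ copies of $K_{k-1}$ glued consecutively at cut-vertices: every block is $P_k$-free (indeed extremal 2-connected), yet the graph contains paths on roughly $t(k-2)$ vertices, and it has $t\binom{k-1}{2}$ edges on only $(k-1)t-(t-1)$ vertices --- strictly denser than $tK_{k-1}\cup K_r$. Your proposed optimization ``maximize $\sum_i e(B_i)$ over graphs whose blocks are extremal $P_k$-free 2-connected graphs'' is therefore carried out over the wrong (much larger) class and would return configurations that are not $P_k$-free, so the third step collapses. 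The second step (a Kopylov-style rotation--extension dichotomy for 2-connected $P_k$-free graphs, yielding either a near-clique or a $K_{(k-2)/2}\otimes\overline{K}_m$ core) is plausible and is essentially how Kopylov and later Balister--Gy\H{o}ri--Lehel--Schelp proceed, but to make the global assembly work you must either run an induction on $n$ directly on $P_k$-free graphs (Faudree--Schelp) or pass to the 2-connected cycle problem by adding a dominating vertex to each connected component and invoking the circumference version of Kopylov's theorem; some such device for controlling paths across the block tree is the missing idea. The equality analysis you flag as the ``main obstacle'' is real but secondary to this structural issue.
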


Define the $n$-vertex graph $H(n,k,a)$ as follows. The vertex set of
$H(n,k,a)$ is partitioned into three sets $A$, $B$, $C$ with $|A| = a$, $|B| = n-k + a$ and
$|C| = k-2a$ and the edge set of $H(n,k,a)$ consists of all edges between $A$ and $B$ and all edges in $A\cup C$. Let
$h(n,k,a) := e(H(n,k,a) ) ={k-a \choose 2}+a(n-k+a)$.

For connected graph without containing a copy of $P_k$, Balister, Gy\H{o}ri, Lehel and Schelp \cite{Balister2008} and  Kopylov \cite{Kopylov1977} proved the following.

\begin{theorem}(Balister, Gy\H{o}ri, Lehel and Schelp\cite{Balister2008},  Kopylov \cite{Kopylov1977})\label{extremal for connected}
Let $G$ be a connected graph containing no copy of $P_k$ on $n$ vertices. Let $t=\lfloor k/2\rfloor$. Then $e(G) \leq \max\{h(n,k-1,1), h(n, k-1, t-1)\}$. Moreover, the extremal graph is either $H(n,k-1,1)$ or $H(n,k-1,t-1)$.
\end{theorem}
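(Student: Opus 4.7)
The plan is to follow the classical Kopylov/Balister--Gy\H{o}ri--Lehel--Schelp approach, analyzing an edge-maximal connected $P_k$-free $G$ through a longest path together with P\'osa-rotation structural arguments. First I would reduce to the $2$-connected case: if $G$ has a cut vertex, decompose into blocks and compare edge counts; the monotonicity and convexity properties of $h(n,k-1,a)$ show that any nontrivial decomposition into multiple blocks gives strictly fewer edges than the $2$-connected extrema, so the maximum is attained within the $2$-connected class. This case analysis is standard and uses induction on $n$, with the small-$n$ base cases verified directly.

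Second, in the $2$-connected case, I would take a longest path $P=v_1v_2\ldots v_\ell$ (so $\ell\leq k-1$) and apply P\'osa rotation--extension at both endpoints. By $2$-connectedness, the set of alternative path endpoints produced by rotations has cardinality at least $\delta(G)$, which yields a cycle on $V(P)$ close to spanning $P$ and tightly constrains the neighborhoods of the off-path vertices. The classical Kopylov structural lemma then asserts that every off-path vertex attaches to the \emph{same} set $A\subseteq V(P)$ of size $a$, that $A\cup(V(P)\setminus A)$ forms a clique $K_{k-1-a}$, and consequently $G\cong H(n,k-1,a)$ for some admissible $a$.

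Third, I would optimize over $a$. The function $h(n,k-1,a)={k-1-a\choose 2}+a(n-k+1+a)$ is a convex function of $a$ for fixed $n,k$, so its maximum over the feasible range $a\in\{1,\ldots,t-1\}$ with $t=\lfloor k/2\rfloor$ is attained at one of the endpoints $a=1$ or $a=t-1$. This yields both the upper bound $\max\{h(n,k-1,1),h(n,k-1,t-1)\}$ and the identification of the only extremal graphs as $H(n,k-1,1)$ or $H(n,k-1,t-1)$; to separate these two, note that $h(n,k-1,1)$ dominates for large $n$ when $k$ is small, while $h(n,k-1,t-1)$ dominates in the opposite regime.

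The main obstacle will be the structural lemma in Stage~2: proving that every off-path vertex has its full neighborhood inside the \emph{same} set $A$ and that $A\cup(V(P)\setminus A)$ induces a clique. The key input is $2$-connectedness: any rotation of $P$ produces another longest path, and the set of rotation-equivalent endpoints forms a highly connected subgraph from which the clique structure can be extracted via a standard closure argument. Additional care will be needed to pin down the feasibility range of $a$ and to handle the parity distinction between $k$ even (where $t-1=k/2-1$) and $k$ odd (where $t-1=(k-3)/2$) at the boundaries; this is where Theorem~\ref{extremal for path} can be invoked to cross-check the extremal graphs against the unconnected extremal classification.
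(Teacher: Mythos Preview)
The paper does not prove this theorem at all: it is quoted as a known result from \cite{Balister2008} and \cite{Kopylov1977}, with no accompanying argument. So there is no ``paper's own proof'' to compare against; your outline is a sketch of the original sources' approach rather than of anything in this paper.

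Two brief remarks on the outline itself. First, the paper's paragraph immediately after Theorem~\ref{stability for path} indicates the standard reduction used in the cited works: rather than a block decomposition, one adjoins a universal vertex to a connected $P_k$-free graph to obtain a $2$-connected graph with no cycle of length at least $k+1$, and then invokes Kopylov's theorem on long cycles in $2$-connected graphs. Your Stage~1 block decomposition is a plausible alternative, but the claimed ``monotonicity and convexity of $h$'' does not by itself rule out block configurations; you would need an explicit inductive comparison of $\sum_i h(n_i,k-1,a_i)$ against $h(n,k-1,a)$ under the sharing of cut vertices, which is more delicate than you suggest. Second, in Stage~2 your description of the structural lemma is slightly off: in $H(n,k-1,a)$ the clique is on $A\cup C$ of size $(k-1)-a$, not on $A\cup(V(P)\setminus A)$; and proving that \emph{all} off-path vertices attach to the \emph{same} set $A$ is precisely the hard step in Kopylov's argument, handled there via his ``$m$-closure'' operation rather than pure P\'osa rotation. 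Your identification of this as the main obstacle is correct, but the sketch does not yet indicate how you would cross it.
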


Recently, F\"{u}redi, Kostochka and Verstra\"{e}te \cite{Furedi2016}  proved the following stability result for connected graph without containing a copy of $P_k$.

\begin{theorem}(F\"{u}redi, Kostochka and Verstra\"{e}te \cite{Furedi2016})\label{stability for path}
Let $t=\lfloor k/2 \rfloor$ and $n\geq 3t-1$. If $G$ is a connected
 graph containing no copy of $P_k$ on $n$ vertices, then $e(G) \leq h(n+1, k+1, t-1)-n$ unless\\
(a) $k = 2t$, $k\neq 6$, and $G\subseteq H(n,k,t-1)$ or\\
(b) $k = 2t+1$ or $k = 6$, and $G-A$ is a star forest for some $A\subseteq V (G)$ of size at most $t-1$.
\end{theorem}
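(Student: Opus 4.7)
The plan is to establish matching lower and upper bounds. For the lower bound, for each split $n = n_0 + n_1$, I exhibit a $P_k^p$-free graph achieving the claimed edge count. Writing $m = 2\lfloor k/(p+1)\rfloor + j$, the construction takes an extremal $P_m$-free graph $H_0$ on $n_0$ vertices (furnished by Theorem~\ref{extremal for path}) and a Tur\'an graph $T(n_1,p)$ on $n_1$ vertices, combined by the appropriate join so that the total edge count matches the right-hand side of the formula. To verify $P_k^p$-freeness, I argue by contradiction: any putative copy of $P_k^p$ must meet each of the $p$ Tur\'an classes in an independent set of $P_k^p$, whose size is at most $\lceil k/(p+1)\rceil$. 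Hence at least $m$ of its vertices lie inside $H_0$, and a combinatorial inspection of their positions along the spine $v_1,\dots,v_k$ of $P_k^p$ reveals that they induce a copy of $P_m$ in $H_0$, a contradiction. The $j$ correction encodes a boundary case when $k \equiv p \pmod{p+1}$, where the extreme endpoint of the spine bumps the forced path length by one.

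\textbf{Upper bound via Simonovits.} Let $G$ be a $P_k^p$-free graph on $n$ vertices attaining $\mathrm{ex}(n,P_k^p)$. Since $\chi(P_k^p) = p+1$, equation (\ref{E-S-S}) gives $e(G) = (1-1/p){n \choose 2} + o(n^2)$. I then invoke Simonovits's structural theorem: for $n$ sufficiently large, every extremal graph for a $(p+1)$-chromatic graph $H$ has the form of a join of a ``small'' part $H_0$ belonging to the decomposition family of $H$ with an almost-complete $p$-partite graph. Applying this to $H = P_k^p$ pins $G$ down as $G = H_0 \otimes T(n_1,p)$ for some $n_0+n_1=n$. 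The key lemma I must establish is the identification of the decomposition family: $H_0$ belongs to it if and only if $H_0$ is $P_m$-free. One direction is the embedding analysis used for the lower bound; for the other direction, given any $P_m$ in $H_0$ I weave a copy of $P_k^p$ inside $H_0 \otimes T(n_1,p)$ by threading the $m$ path vertices through the $p$ Tur\'an classes using $p$-tuples of fresh vertices, with the residue class of $k$ modulo $p+1$ determining the correct boundary placement and hence $j$.

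\textbf{Optimisation and characterisation.} Once the structural form $G = H_0 \otimes T(n_1,p)$ with $H_0$ being $P_m$-free is established, the problem reduces to maximising the edge count over $n_0 + n_1 = n$. The $H_0$-side is maximised by Theorem~\ref{extremal for path}, giving $f(n_0,m)$, while the Tur\'an side is uniquely maximised by $T(n_1,p)$. Summing and taking the maximum yields the claimed value of $\mathrm{ex}(n,P_k^p)$. The full classification of extremal graphs then follows: $H_0$ must be one of the two extremal $P_m$-free graphs described in Theorem~\ref{extremal for path}, and the Tur\'an part must be exactly $T(n_1, p)$. Theorem~\ref{stability for path} enters here to rule out near-optimal but non-extremal $H_0$ via the extra margin it provides, and Theorem~\ref{extremal for connected} supplies the structure of the connected components of $H_0$ when the extremal choice is not the disjoint union of cliques.

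\textbf{Main obstacle.} The heart of the proof is the precise characterisation of the decomposition family of $P_k^p$ as the class of $P_m$-free graphs with the correct value of $m$. Both the direction producing $P_k^p$ inside the join (given a $P_m$ in $H_0$) and the contrapositive direction (bounding the $H_0$-part of a hypothetical $P_k^p$) require a careful combinatorial analysis of how a $P_k^p$ distributes over the $p+1$ colour classes of its essentially unique optimal colouring. The arithmetic accounting in this analysis is what forces the unusual parameter $m = 2\lfloor k/(p+1)\rfloor + j$, with the corrective term $j$ appearing precisely when $k \equiv p \pmod{p+1}$. A secondary technical point is the passage from Simonovits's (approximate) stability conclusion to the exact join structure, where one must show that no extra edges can be saved by spreading the non-Tur\'an part across several colour classes.
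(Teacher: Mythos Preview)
Your proposal does not address the stated theorem at all. Theorem~\ref{stability for path} is the F\"uredi--Kostochka--Verstra\"ete stability result for connected $P_k$-free graphs; it is quoted from \cite{Furedi2016} and is not proved in this paper --- it serves as a black-box input (inside the proof of Lemma~\ref{stablity for non-connecte}). What you have written is instead a proof sketch of the main result, Theorem~\ref{main}, on $\mathrm{ex}(n,P_k^p)$. Nothing in your text engages with connected $P_k$-free graphs, the function $h(n,k,a)$, or the star-forest conclusion in part~(b).

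Even read as a sketch of Theorem~\ref{main}, your outline has a real gap at the step you flag only in passing. You assert that Simonovits's theorem directly delivers the exact form $G = H_0 \otimes T(n_1,p)$ with $H_0$ in the decomposition family. But Theorem~\ref{family-extremal-graphs} only says that after deleting a bounded set $D$ of at most $r$ vertices the remainder is a join $\bigotimes_{i} G^i$ of $p$ almost-balanced pieces, each a disjoint union of bounded-size symmetric components; the pieces $G^2,\dots,G^p$ are not a priori edgeless, and the vertices of $D$ must be reabsorbed. The paper spends most of its effort here: Lemmas~\ref{no-two-matching} and~\ref{no-edge-in-G2} are used to force all but one $G^i$ to be independent, and Lemma~\ref{stablity for non-connecte} (which is precisely where Theorem~\ref{stability for path} enters) pins down the structure of $S_n[A_1]$ as containing either a large $K_{t-1,\ell}$ or many disjoint $K_{s-1}$'s, enabling Claims~1 and~2. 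Your sentence about ``the passage from Simonovits's (approximate) stability conclusion to the exact join structure'' names the obstacle but gives no mechanism for overcoming it.
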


The extremal and stability problems of connected graphs without containing paths are strongly connected with the extremal and stability results of 2-connected graphs without containing long cycles. This is observed by the fact that if we add a new vertex and join it to all vertices of a connected graph without containing a copy of $P_k$, then the obtained graph is 2-connected and without containing a cycle of length at least $k+1$. We refer the readers to \cite{Furedi2018,MN2019,MY2020} for more detailed version of the above theorem.

We will need the following stability result to prove our main theorem.

\begin{lemma}\label{stablity for non-connecte}
Let $G$ be graph on $n$ vertices without containing a copy of $P_k$. Let $t=\lfloor k/2 \rfloor$, $c$ be a positive integer and $n$ be sufficiently large. If $e(G)\geq \emph{ex}(n,P_k)-c$, then $G$ contains either a copy of  $K_{t-1,s}$ or $sK_{k-1}$, where $s=\min\{n/(16c),n/(2k-2)\}$.
\end{lemma}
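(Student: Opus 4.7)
The plan is to prove the contrapositive: assuming $G$ contains neither $sK_{k-1}$ nor $K_{t-1,s}$, I will deduce $e(G)<\mbox{ex}(n,P_k)-c$. The strategy benchmarks each component against the \emph{clique rate} $(k-2)/2$ edges per vertex realized by $K_{k-1}$, and shows the sum of per-component deficits exceeds the extremality gap $(k-2)n/2-\mbox{ex}(n,P_k)+c$, which is at most $(k-1)^2/8+c$ by a direct inspection of Theorem~\ref{extremal for path}.

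First I would decompose $G$ into its connected components and call a component \emph{perfect} if it is isomorphic to $K_{k-1}$. There are at most $s-1$ perfect components by hypothesis, spanning at most $(s-1)(k-1)\le n/2$ vertices (using $s\le n/(2(k-1))$). Thus at least $n/2$ vertices lie in non-perfect components, and for each such component $C$ on $m$ vertices I must exhibit $\delta_C:=(k-2)m/2-e(C)>0$. If $m\le k-2$ then $e(C)\le\binom{m}{2}$ gives $\delta_C\ge m(k-1-m)/2\ge m/2$; if $m=k-1$ with $C\ne K_{k-1}$ then at least one edge is missing, so $\delta_C\ge 1$. For $m\ge k$ I combine Theorem~\ref{extremal for connected} with the stability Theorem~\ref{stability for path}: either $e(C)\le h(m+1,k+1,t-1)-m$, which yields a $\delta_C$ linear in $m$ (at least $m-O(1)$ for even $k\ne 6$ and $3m/2-O(1)$ for odd $k$ or $k=6$); or $C$ falls into a rigid case, namely $C\subseteq H(m,k,t-1)$ for even $k\ne 6$, or $C-A$ is a star forest for some $|A|\le t-1$ otherwise. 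The rigid structures contain $K_{t-1,m-k+t-1}$ (or its star-forest analogue on the $A$-neighbors), so $m\ge s+k-t+1$ would produce the forbidden $K_{t-1,s}\subseteq G$; hence $m<s+k$, and a direct edge count against the clique rate gives $\delta_C\ge\binom{t}{2}\ge 1$.

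Summing deficits, the worst per-vertex contribution comes from the small-book case and is at least $\binom{t}{2}/(s+k)$. Applied to the at least $n/2$ non-perfect vertices, $\sum_C\delta_C\ge n\binom{t}{2}/(2(s+k))$. A routine case split on the two branches of $s=\min\{n/(16c),n/(2k-2)\}$ shows that for $n$ large enough this bound exceeds $(k-1)^2/8+c$, so $e(G)\le (k-2)n/2-\sum_C\delta_C<\mbox{ex}(n,P_k)-c$, the desired contradiction.

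The main technical obstacle I foresee lies in the odd-$k$ (and $k=6$) clause of Theorem~\ref{stability for path}, where the rigid description is only that $C-A$ is a star forest with $|A|\le t-1$. Extracting $K_{t-1,s}$ there requires choosing $A$ of size exactly $t-1$ (padding with a high-degree vertex of the star forest when $|A|<t-1$) and isolating $s$ leaves fully joined to $A$; if this selection fails, the missing edges between $A$ and the unselected leaves, together with the sparse star-forest skeleton, must be counted to recover the linear-in-$m$ deficit needed in place of the $\binom{t}{2}$ bound available in the even case.
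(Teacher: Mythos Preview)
Your deficit-accounting strategy is different from the paper's argument and is a reasonable line of attack, but there is a genuine gap in your treatment of the ``rigid case''. When Theorem~\ref{stability for path} tells you that a component $C$ on $m$ vertices satisfies $C\subseteq H(m,k,t-1)$, this does \emph{not} imply that $C$ itself contains $K_{t-1,m-k+t-1}$: being a subgraph of a book says nothing about which edges of the book are actually present in $C$. Your deduction ``hence $m<s+k$'' therefore fails. One can have a component in the rigid case with $m$ as large as $\Theta(n)$ whose deficit against the clique rate is only the constant $\binom{t}{2}$ (this is exactly what Theorem~\ref{extremal for connected} gives, and no more), and then the per-vertex deficit $\binom{t}{2}/m$ is far too small to sum to anything useful. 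To repair this you would need a further dichotomy inside the rigid case: either at least $s$ vertices of $B\cup Z$ are fully joined to $A$ in $C$ (giving $K_{t-1,s}$), or at least $m-t-s+2$ edges of the complete bipartite part are missing, which pushes the deficit up to roughly $m-s$ and restores a usable per-vertex bound. The same issue, compounded, arises in the odd-$k$ star-forest case you already flagged.

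By contrast, the paper sidesteps this entirely by first splitting on the size of the largest component. If some component $C_1$ has at least $2s$ vertices, the \emph{global} hypothesis $e(G)\ge\mbox{ex}(n,P_k)-c$ forces $e(C_1)\ge h(|C_1|,k,t-1)-c$, so at most the constant $c$ edges are missing from the book structure and $K_{t-1,s}$ drops out immediately. If every component has fewer than $2s$ vertices, the paper never invokes stability at all: a simple pairing argument (any two non-$K_{k-1}$ components together fall at least one edge short of extremal, by Theorems~\ref{extremal for path} and~\ref{extremal for connected}) shows there are at least $n/(4s)>2c$ such pairs, contradicting the edge bound. This two-case split is what lets the paper transfer the global defect $c$ to a single component; your uniform per-component analysis loses that information.
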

\begin{proof} Let $s=n/(4c) \gg c$. We consider the following two cases:

\medskip

Case 1. The largest component of $G$ is on at least $2s$ vertices. Let $C_1$ be a component of $G$ on $c_1\geq 2s$ vertices. Since $G$ does not contain $P_k$ a subgraph and $e(G)\geq \mbox{ex}(n,P_k)-c$, we have $e(C_1)\geq \mbox{ex}(n,P_k)-c-\mbox{ex}(n-c_1,P_k)\geq \mbox{ex}(c_1,P_k)-c$. Since $C_1$ is a connected graph and $c_1 \gg c$, we have $e(C_1)\geq h(|C_1|+1, k+1, t-1)-|C_1|$. If $k=2t$ and $k\neq 6$, then applying Theorem~\ref{stability for path} (a), we have $C_1\subseteq H(c_1,k,t-1)$. Note that $e(C_1)\geq\mbox{ex}(c_1,P_k)-c\geq h(c_1,k,t-1)-c$ and $c_1\geq 2s \gg c$, $C_1$ contains a copy of $K_{t-1,s}$. If $k = 2t+1$ or $k = 6$, then applying Theorem~\ref{stability for path} (b), $C_1-A$ is a star forest for some $A\subseteq V (C_1)$ of size at most $t-1$. Applying a more detailed version of Theorem~\ref{stability for path} (See Theorem 4.1\footnote{The result used by us here is a corollary of Theorem 4.1 in \cite{Furedi2016} which can be proved by similar argument of the proof of Theorem~\ref{stability for path} (Theorem 1.6 in \cite{Furedi2016}).} and Figure 1 in \cite{Furedi2016}), if $t-1\geq 4$, then each star in $C_1-A$ on $\ell$ vertices is joint to $A$ by at most $t+\ell-2$ edges and each edge in $C_1-A$ is joint to $A$ by at most $4$ edges. If $t-1=3$, then each star in $C_1-A$ on $\ell$ vertices is joint to $A$ by at most $\ell-2$ edges and each edge in $C_1-A$ is joint to $A$ by at most $6$ edges. Note that $e(C_1)\geq\mbox{ex}(c_1,P_k)-c\geq h(c_1,k,t-1)-c$ and $c_1\geq 2s \gg c$, there are at least $s$ vertices which are joint to all vertices of $A$. Otherwise, we will get a contradiction to $e(C_1)\geq h(c_1,k,t-1)-c$. Thus $C_1$ contains a copy of $K_{t-1,s}$.

\medskip

Case 2. The largest component of $G$ is on at most $2s$ vertices. Let $C_1,\ldots,C_m$ be the components of $G$. Suppose that $G$ does not contain a copy of $sK_{k-1}$. Then there are at least
$$\ell=\frac{n-(s-1)(k-1)}{2s}\geq \frac{n-n/2}{2s}=\frac{n}{4s}$$
 components of $G$ which are not isomorphism to $K_{k-1}$. Without lose of generality, let $C_1,\ldots C_{\ell}$ be the components which are not isomorphism to $K_{k-1}$.
By Theorems~\ref{extremal for path} and \ref{extremal for connected}, for any two $C_i$ and $C_j$ with $1\leq i<j\leq \ell$, $C_i\cup C_j$  is not an extremal graph for $P_k$. Thus  we have
\begin{equation*}
e(C_i)+e(C_j) \leq \mbox{ex}(|C_i|+|C_j|,P_k)-1.
\end{equation*}
Thus  we have
\begin{align*}
e(G)\leq &  \sum_{i=1}^\ell\mbox{ex}(|C_i|,P_k)+\mbox{ex}\left(n-\sum_{i=1}^{\ell}|C_i|,P_k\right)\                              \\
\leq &\mbox{ex}\left(\sum_{i=1}^{\ell}|C_i|,P_k\right)-\left\lfloor\frac{\ell}{2}\right\rfloor+\mbox{ex}\left(n-\sum_{i=1}^{\ell}|C_i|,P_k\right)\\
\leq &\mbox{ex}(n,P_k)-\left\lfloor\frac{\ell}{2}\right\rfloor.
\end{align*}
We get a contradiction to $e(G)\geq \mbox{ex}(n,P_k)-c$, since $\lfloor\ell/2\rfloor=\lfloor n/(8s)\rfloor>c$. Thus $G$ contains a copy of $sK_{k-1}$. The proof is completed.\end{proof}

\subsection{Decomposition family of graphs}

The decomposition family of a graph was introduced by Simonovits \cite{Simonovits1982}. For a fixed graph, the error term of (\ref{E-S-S}) is determined by the decomposition family of it. Now let us give the definition of decomposition family.

\medskip

\noindent{\bf Definition.} Given a graph $L$ with $\chi(L)=r+1$. Let $\mathcal{M}(L)$ be the family of minimal graphs $M$ that satisfy the following: The graph obtained from putting a copy of $M$ (but not any of its proper subgraphs) into a class of a large $T(n,p)$ contains $L$ as a subgraph. We call $\mathcal{M}(L)$ the {\it decomposition family} of $L$.

\medskip

The following proposition is simple but plays an important role in our proof.

\begin{proposition}\label{decompositionofpowerpath}
Let $s=2\left\lfloor k/(p+1)\right\rfloor+j$, where $j=1$ if $k=p$ mod $p+1$ and $j=0$ otherwise. Then $\mathcal{M}(P^p_k)=\{P_{s}\}$.
\end{proposition}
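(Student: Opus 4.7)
The plan is to translate the decomposition-family condition into a combinatorial condition on subsets of $V(P_k^p)$ and then carry out a window-counting argument. Since $\chi(P_k^p)=p+1$ (the colouring $i\mapsto(i-1)\bmod(p+1)$ is proper, and any $p+1$ consecutive vertices form a $K_{p+1}$), we are decomposing against $T(n,p)$. Given an embedding of $P_k^p$ into $T(n,p)+M$ with $M$ placed in one class $V_1$, let $A\subseteq V(P_k^p)$ be the set of vertices sent to $V_1$. Then $P_k^p[A]$ must embed into $M$, while the remaining classes give a $(p-1)$-colouring of $P_k^p-A$, forcing $\omega(P_k^p-A)\leq p-1$. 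This is equivalent to the condition that every set of $p+1$ consecutive vertices in $\{1,\ldots,k\}$ contains at least two elements of $A$ (otherwise $p$ vertices of the window lying outside $A$ form a $K_p$). Call such an $A$ valid.

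I would then prove $P_s\subseteq P_k^p[A]$ for every valid $A$ via two facts. \textbf{Fact 1.} If $A=\{a_1<\cdots<a_m\}$ is valid, then $a_{i+1}-a_i\leq p$ for every $i$; otherwise the window $[a_{i+1}-p,a_{i+1}]$ (the case $a_{i+1}\leq p$ being trivial) would contain only $a_{i+1}$. Hence $a_1a_2\cdots a_m$ is a Hamiltonian path in $P_k^p[A]$, so $P_{|A|}\subseteq P_k^p[A]$. \textbf{Fact 2.} $|A|\geq s$. Write $k=q(p+1)+r$ with $0\leq r\leq p$ and partition $\{1,\ldots,k\}$ into consecutive length-$(p+1)$ blocks $B_1,\ldots,B_q$ (and a residual block $B_{q+1}$ of length $r$ if $r>0$). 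Each $B_i$ is itself a $(p+1)$-window, giving $|A\cap B_i|\geq 2$, so $|A|\geq 2q$; this already equals $s$ when $r\neq p$. When $r=p$, the sliding window $[k-p,k]$ meets $B_q$ in only the vertex $q(p+1)$, and a short case split on whether that vertex lies in $A$ forces one additional element inside $B_{q+1}$, so $|A|\geq 2q+1=s$.

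Conversely, take $A_0$ to be the union of two colour classes of the natural $(p+1)$-colouring whose sizes sum to $s$: any two smallest classes when $r\neq p$, or the unique size-$q$ class together with any size-$(q+1)$ class when $r=p$. Listing $A_0$ in increasing order, consecutive differences lie in $[1,p]$ and non-consecutive differences are at least $p+1$, so $P_k^p[A_0]$ is exactly $P_s$. Since elements of $\mathcal{M}(P_k^p)$ are precisely the minimal elements of $\{P_k^p[A]:A\text{ valid}\}$ under the subgraph order, and we have shown that $P_s$ is attained and is contained in every such graph, $\mathcal{M}(P_k^p)=\{P_s\}$. The main obstacle is obtaining the extra $+1$ in Fact 2 when $r=p$: block constraints alone fall one element short, and they must be combined with the terminal sliding window through the case split indicated above.
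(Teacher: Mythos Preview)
Your argument is correct and supplies the full details that the paper leaves out: the paper's own proof is the single sentence ``Note that there is only one possible proper vertex coloring of $P_k^p$ on $p+1$ colors, the result follows easily.'' Both approaches rest on the same idea---analyze the set $A$ of vertices of $P_k^p$ that land in the distinguished class of $T(n,p)+M$---but you make the argument explicit via the window condition (every block of $p+1$ consecutive vertices meets $A$ twice), the gap bound $a_{i+1}-a_i\le p$, and the block count $|A|\ge s$, whereas the paper simply gestures at the rigidity of the $(p{+}1)$-colouring. One small point worth stating explicitly: your $A_0$ is indeed valid, since each $(p+1)$-window contains exactly one vertex of every colour class and hence exactly two elements of $A_0$; you use this when claiming that $P_s$ is attained in $\{P_k^p[A]:A\text{ valid}\}$. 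With that line added, your write-up is a complete proof where the paper offers only a hint.
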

\begin{proof} Note that there is only one possible proper vertex coloring of $P_k^p$ on $p+1$ colors, the result follows easily.\end{proof}

\subsection{Extremal graphs for graphs whose decomposition family contains linear forest }

Given a graph $H$ with $\chi(H)\geq 3$, the decomposition family of $H$ often helps us to characterize the extremal graphs for $H$. Most results of the Tru\'{a}n numbers of graphs are fucus on fixed graphs. We will introduce a general deep theorem of simonovits here. The theorem characterize the structure of the extremal graphs for graphs whose decomposition family contains a linear forest\footnote{A linear forest is a forest which is consist of paths.}.

Fist, we  introduce an important definition.

\medskip

\noindent{\bf Definition.} Denote by $\mathcal{D}(n,p,r)$ the family of graphs $G_n$ on $n$ vertices satisfying the following symmetry condition:\\
(1) After deleting at most $r$ vertices of $G_n$, the remaining graph $G^\prime = \bigotimes_{1\leq i \leq p} G^i$ with $|V(G^i)|-n/p\leq r$ for each $1\leq i \leq p$.\\
(2) For each $1\leq i\leq p$, $G^i$ is consist of vertex disjoint copies of connected subgraph\footnote{We consider isolated vertex as a connected subgraph of $G^i$.} $H_i$ with $|V(H_i)|\leq r$ such that any two copies of $H_i$ are {\it symmetric subgraphs} in $G_n$: for any two copies, say $H_i^j$ and $H_i^\ell$, of $H_i$, there exists an isomorphism $\psi_i^{j,\ell}:H_i^j\rightarrow H_i^\ell$ such that for every $x\in V(H_i^j)$ and $y\in G_n-H_i^j-H_i^\ell$, $xy$ is an edge if and only if $\psi_i^{j,\ell}(x)y$ is an edge.

\begin{theorem}\cite{Simonovits1974}\label{extremal-in-D(npr)}
Let $H$ be a given graph. If $\mathcal{M}(H)$ contains a linear forest, then there exist $r=r(H)$ and $n_0=n_0(r)$ such that $\mathcal{D}(n,p,r)$ contains an extremal graph for $H$ when $n\geq n_0$. Moreover, let $n$ be sufficiently large, if there is only one extremal graph in $\mathcal{D}(n,p,r)$ for $H$, then it is the unique extremal graph for $H$.
\end{theorem}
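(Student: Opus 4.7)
The plan is to follow Simonovits's original strategy, which combines a stability argument with a symmetrization step exploiting the linear-forest hypothesis on $\mathcal{M}(H)$. Set $p=\chi(H)-1$ and fix a linear forest $L\in\mathcal{M}(H)$ with $\ell=|V(L)|$. Start from an arbitrary extremal graph $G_n$ for $H$. By the Erd\H{o}s-Stone-Simonovits theorem, $e(G_n)=t(n,p)+o(n^2)$, and the Erd\H{o}s-Simonovits stability theorem yields a $p$-partition $V(G_n)=V_1\cup\cdots\cup V_p$ with $|V_i|=n/p+o(n)$ whose "error" (edges inside classes plus non-edges between classes) is $o(n^2)$.

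Next I would upgrade this approximate structure to one in which each class is strongly constrained. Because $L\in\mathcal{M}(H)$, a copy of $L$ inside a single $V_i$ together with a suitable transversal from the almost-complete multipartite part on the other classes would already produce a copy of $H$; since $G_n$ is $H$-free, each $G_n[V_i]$ avoids $L$ and therefore has bounded linear/path complexity (hence $O(n)$ edges). A standard min-degree cleaning, removing every vertex whose cross-degree falls below $(1-1/p-\varepsilon)n$, terminates after deleting at most $r_0=r_0(H)$ vertices; denote this exceptional set by $W$. On $G_n-W$, each class $V_i\setminus W$ is then a disjoint union of connected components of bounded order, and every remaining vertex has large cross-degree into each other class.

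The heart of the argument is symmetrization inside each class, carried out via Simonovits's progressive induction. Group the small components in each $V_i\setminus W$ by isomorphism type, and for any two components of the same type, replace the one with fewer "external" edges by a copy that matches the external-edge pattern of the one with more external edges. Because $G_n$ is extremal this operation cannot increase the edge count, so it preserves it; a would-be new copy of $H$ created by the swap would yield, inside some class, a proper subgraph of $L$, contradicting the minimality built into the definition of $\mathcal{M}(H)$. An integer-valued asymmetry potential (say, the number of distinct external-neighborhood patterns summed over classes) strictly decreases under each swap, so the process terminates in finitely many steps. Finally, reinsert the vertices of $W$; each attaches to almost everything, so it can be absorbed into some class without destroying the symmetric structure, producing a graph in $\mathcal{D}(n,p,r)$ for some $r=r(H)$ with the same number of edges as $G_n$.

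The main obstacle is the symmetrization step: one must certify that no swap ever creates a copy of $H$. This is where the hypothesis that $\mathcal{M}(H)$ contains a \emph{linear forest}, rather than an arbitrary graph, is essential — the linear forest is the simplest kind of connected/disconnected pattern that can be embedded in a class, and its \emph{minimality} in $\mathcal{M}(H)$ is precisely what blocks a post-swap copy of $H$ from arising. The uniqueness clause is then immediate: if only one member of $\mathcal{D}(n,p,r)$ is extremal for $H$, the construction above shows every extremal $G_n$ must coincide with it, since each transformation preserves both edge count and $H$-freeness and the terminal graph lies in $\mathcal{D}(n,p,r)$.
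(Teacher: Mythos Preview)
The paper does not prove this theorem. Theorem~\ref{extremal-in-D(npr)} (and the companion Theorem~\ref{family-extremal-graphs}) are quoted from Simonovits~\cite{Simonovits1974} as black-box structural tools; the present paper offers no argument of its own for them, so there is no ``paper's proof'' against which to compare your proposal.

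That said, your sketch is a reasonable outline of Simonovits's method, but the final paragraph on the uniqueness clause contains a genuine gap. You argue that the symmetrization process carries any extremal $G_n$ to a graph in $\mathcal{D}(n,p,r)$ while preserving the edge count, and conclude that if $\mathcal{D}(n,p,r)$ contains a unique extremal graph then $G_n$ must \emph{coincide} with it. That does not follow: your transformations change the graph, so all you have shown is that $e(G_n)$ equals the maximum over $\mathcal{D}(n,p,r)$, i.e.\ the first sentence of the theorem again. To get uniqueness you must argue that in an extremal graph every symmetrization step is already trivial: if two components of the same isomorphism type had different external-neighbourhood patterns, replacing the one of smaller external degree by a clone of the other would \emph{strictly} increase the edge count while remaining $H$-free, contradicting extremality. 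Hence all such components are already symmetric in $G_n$, and similarly the exceptional vertices in $W$ must already sit in the right place; only then does $G_n$ itself lie in $\mathcal{D}(n,p,r)$ and the ``moreover'' clause follows.
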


We prefer to use to following theorem which is not stated in \cite{Simonovits1974} (See the last sentence above the last paragraph on page 371 of \cite{Simonovits1974}). The benefit of this theorem is that it can help us to characterize all  extremal graphs.

\begin{theorem}\label{family-extremal-graphs}\cite{Simonovits1974}
Let $H$ be a given graph. If $\mathcal{M}(H)$ contains a linear forest, then there exist $r=r(H)$ and $n_0=n_0(r)$ such that the extremal graphs of $H$ belong to the family of graphs satisfying the following:\\
(1) After deleting at most $r$ vertices of $G_n$, the remaining graph $G^\prime = \bigotimes_{1\leq i \leq p} G^i$ with $|V(G^i)|-n/p\leq r$ for each $1\leq i \leq p$.\\
(2) For each $1\leq i\leq p$, $G^i$ is consist of small connected graphs $H^j_i$ with $|V(H_{i}^j)|\leq r$ such that any two copies of $H_{i}^j$s are symmetric subgraphs  in $G_n$ and  if $s\neq t$, then $H_{i}^s$ and $H_{i}^t$ are not isomorphism.
\end{theorem}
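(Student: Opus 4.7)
The plan is to mirror Simonovits' proof of Theorem~\ref{extremal-in-D(npr)} while tracking the isomorphism types of the components of each class. The starting point is standard: since $\chi(H)=p+1$, a stability argument shows that any extremal $G_n$ for $H$, after removing a set $D$ of at most $r$ vertices, becomes $p$-partite with $G_n - D = \bigotimes_{i=1}^p G^i$ and $||V(G^i)| - n/p|\leq r$. Because $\mathcal{M}(H)$ contains a linear forest $L$, any component of $G^i$ of order substantially larger than $|V(L)|$ would contain $L$, and embedding $L$ into one class of a near-Tur\'an graph would then produce a copy of $H$; hence every component of every $G^i$ has order at most $r$ (after possibly enlarging $r$).

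The second and main step is Simonovits' symmetrization, refined to respect isomorphism types. Within each $G^i$, group the components into isomorphism classes and let $H_i^1, H_i^2, \ldots$ be one representative per class. For two copies $C, C'$ of the same type with an isomorphism $\psi : C \to C'$, the claim is that $\psi$ extends to an automorphism of $G_n$; equivalently, every $v\in V(C)$ has the same neighbourhood outside $C\cup C'$ as $\psi(v)$. Suppose not and, without loss of generality, the external degree of $v$ exceeds that of $\psi(v)$. Replace the external neighbourhood of $\psi(v)$ by that of $v$: the resulting graph has strictly more edges, and it remains $H$-free because any new copy of $H$ would have to use $\psi(v)$ together with a new external neighbour, and projecting this copy onto the $p$-partition would force some $M\in\mathcal{M}(H)$ inside a single class $G^{i'}$, contradicting the $M$-freeness of $G_n$ prior to the swap. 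This contradicts the extremality of $G_n$.

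Iterating the swap equalises external neighbourhoods of corresponding vertices across all copies of each fixed type, yielding the required symmetry. Finally, if two representatives $H_i^s$ and $H_i^t$ happen to be abstractly isomorphic, merge their classes; the symmetry just established makes this consistent, and we obtain the non-isomorphic-types condition of the theorem.

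The main obstacle is the $H$-freeness check after each symmetrization swap, which is exactly where the decomposition family enters: a newly introduced copy of $H$ must, via its $(p+1)$-colouring, restrict to some $M\in\mathcal{M}(H)$ in a single class $G^{i'}$, and one has to verify that the swap cannot have created such an $M$. This is precisely the argument in~\cite{Simonovits1974}; the new observation is merely that it applies equally well at the level of each isomorphism class of components separately, yielding the finer structural conclusion stated here.
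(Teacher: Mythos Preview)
The paper does not prove this theorem at all: it is quoted as a result from Simonovits~\cite{Simonovits1974}, and the author even remarks that it is ``not stated in~\cite{Simonovits1974}'' but is to be read off from the argument there (see the sentence immediately preceding the theorem). So there is no ``paper's own proof'' to compare your proposal against; your sketch is an attempt to reconstruct Simonovits' argument, not the present paper's.

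That said, your sketch has a real gap in the symmetrization step. You argue that if $v\in C$ and $\psi(v)\in C'$ have different external degrees, replacing the smaller neighbourhood by the larger one strictly increases the edge count and preserves $H$-freeness, contradicting extremality. But this only rules out \emph{unequal} external degrees; it does not show that isomorphic components with the \emph{same} external degree sums must already be symmetric subgraphs of $G_n$. A swap in that case keeps the edge count fixed, so extremality alone gives no contradiction, and yet the theorem asserts that \emph{every} extremal $G_n$ (not merely some) enjoys the symmetry. Closing this gap is exactly where Simonovits' progressive induction, rather than a single Zykov-type swap, is needed. Your $H$-freeness check after a swap is also too quick: a putative new copy of $H$ could use vertices of both $C$ and the modified $C'$ simultaneously, and then the ``project back to $v$'' map is not well-defined. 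Simonovits handles this by cloning whole components rather than swapping single-vertex neighbourhoods, and by controlling how many components a copy of $H$ can meet; your sketch does not address this.
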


\remark If we do not use Theorem~\ref{family-extremal-graphs} but use Theorem~\ref{extremal-in-D(npr)} in our proof of the main theorem, we can also get the Tur\'{a}n number of $P_k^p$,   but just find only one extremal graph for $P_k^p$.

\section{Proof of the main theorem}

We present two lemmas which help us to find a copy of $P_k^p$.

\begin{lemma}\label{no-two-matching}
Let $G = \bigotimes_{1\leq i \leq p} G^i$. If at least two of $G^is$ contains a matching with size at least $\ell=\lceil k/(p+1)\rceil$, then $G$ contains a copy of $P_k^p$.
\end{lemma}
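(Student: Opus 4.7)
The plan is to construct an explicit copy of $P_k^p$ inside $G$. Label the vertices of the desired power of path as $v_1,\ldots,v_k$, and without loss of generality suppose that $G^1$ and $G^2$ are the two parts containing matchings, each of size at least $\ell$. Write the matching edges as $x_1^iy_1^i,\ldots,x_\ell^i y_\ell^i$ for $i=1,2$.

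The main idea is to use a cyclic assignment of period $p+2$: within each period I would send positions $1,2$ to $G^1$, positions $3,4,\ldots,p$ to $G^3,G^4,\ldots,G^p$ respectively, and positions $p+1,p+2$ to $G^2$. Concretely, for the $m$-th period I would send positions $(m-1)(p+2)+1$ and $(m-1)(p+2)+2$ to $x_m^1$ and $y_m^1$, positions $(m-1)(p+2)+p+1$ and $(m-1)(p+2)+p+2$ to $x_m^2$ and $y_m^2$, and position $(m-1)(p+2)+j$ (for $3\le j\le p$) to any fresh vertex of $G^j$. A possibly incomplete last period is handled by simple truncation.

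To verify the embedding I would check two things. Pairs of $v_i$'s placed in different parts of $G$ are adjacent by the definition of $\otimes$, so only same-part pairs matter. Within a single period, the only same-part pairs are $\{v_{(m-1)(p+2)+1},v_{(m-1)(p+2)+2}\}$ in $G^1$ and $\{v_{(m-1)(p+2)+p+1},v_{(m-1)(p+2)+p+2}\}$ in $G^2$, and each is supplied by the corresponding matching edge. For pairs across distinct periods, the index gap is exactly $p+1$ for $G^1$ and $G^2$ and exactly $p+2$ for $G^3,\ldots,G^p$; in either case it is strictly greater than $p$, so the pair is non-adjacent in $P_k^p$ and no edge of $G$ is required.

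A quick count shows that at most $\lceil k/(p+2)\rceil\le\lceil k/(p+1)\rceil=\ell$ matching edges are consumed in each of $G^1$ and $G^2$, so the given matchings are large enough. The main obstacle, and the reason for using period $p+2$ rather than the naive period $p+1$, is exactly this cross-period check: a period of $p+1$ would force consecutive same-part pairs in $G^1$ (or $G^2$) to be at $P_k^p$-distance $p$ and hence adjacent, which a bare matching cannot supply; the extra slot per period buys the gap $p+1$ that matchings can afford. Throughout, I implicitly use that each $G^j$ with $3\le j\le p$ contains enough vertices to supply the middle positions, which in the applications of this lemma is guaranteed since the parts have size on the order of $n/p$, much larger than $k$.
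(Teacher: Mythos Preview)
Your construction is correct, and it is genuinely different from the paper's. The paper builds blocks $Q_i$ of length $p+1$: in each block one of $G^1,G^2$ contributes two consecutive vertices (a matching edge) while every other part contributes one, and a mod-$4$ rule governs which of $G^1$, $G^2$ is ``doubled'' so that consecutive blocks can be glued without producing a same-part pair at distance $\le p$. Verifying that every window of $p+1$ consecutive vertices spans a clique then requires a small case check across the four block types.

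Your period-$(p+2)$ scheme trades one extra slot per block for a construction with no case analysis: each block has the fixed pattern $G^1,G^1,G^3,\dots,G^p,G^2,G^2$, and the only same-part pairs that ever land within distance $p$ are the two consecutive pairs inside a block, each covered by a matching edge. The minimum same-part gap across blocks is exactly $p+1$ for $G^1$ and $G^2$ and $p+2$ for the remaining parts, so nothing further needs checking. The bound $\lceil k/(p+2)\rceil\le\lceil k/(p+1)\rceil=\ell$ shows the given matchings suffice, so you lose nothing by spending the extra slot. Both proofs share the same unstated hypothesis that $G^3,\dots,G^p$ have at least $\Theta(k)$ vertices; as you note, this holds wherever the lemma is invoked.
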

\begin{proof}
Without loss of generality, let $$M_1=x_1x_1^\prime,x_2 x_2^\prime,\ldots,x_{\ell}x_{\ell}^\prime \mbox{ and } M_2=y_1 y_1^\prime,y_2 y_2^\prime,\ldots,y_{ \ell}y_{\ell}^\prime$$ be matchings in $G^1$ and $G^2$ respectively. Let $z_i^j$ be a vertex of $G^i$ for $i=3,\ldots,p$ and $j=1,2,\ldots,2\ell.$ Let $Q_1=x_1x_1^\prime z^1_2z^1_3\ldots z^1_p$ with $z_2^1\in V(G^2-M_2)$ and
$$Q_i=\left\{\begin{array}{ll}\
x_iy_{i-1}y_{i-1}^\prime      z^i_3\ldots z^i_p &\mbox{ when $i=2$ mod 4,}\\
y_{i-1}y_{i-1}^\prime  x^\prime_{i-1} z^i_3\ldots z^i_p  &\mbox{ when $i=3$ mod 4,}\\
y_{i-1}x_{i-1}x_{i-1}^\prime  z^i_3\ldots z^i_p  &\mbox{ when $i=0$ mod 4,}\\
 x_{i-1}x_{i-1}^\prime  y_{i-2}^\prime z^i_3\ldots z^i_p  &\mbox{ when $i=1$ mod 4 and $i\neq 1$.}\\
\end{array}\right.$$
for $i\geq 2$. Let $P=z_p^1Q_1x_1z_p^2Q_2 x_2\ldots z_p^{\ell}Q_{\ell}$ (Be careful! $P$ dose not contain all edges of $M_1\cup M_2$. For example, $P$ does not contain the edge $y_4y_4^\prime$). Then $V(P)=\ell (p+1)\geq k$. We will show that $P$ contains a copy of $P_k^p$.  It is enough to show that if $P^\ast$ is a sub-path of $P$ on $p+1$ vertices, then $G[V(P^\ast)]$ is a complete graph on $p+1$ vertices. Clearly $G[V(Q_i)]$ is a complete graph on $p+1$ vertices. If $P^\ast\notin \{Q_1,Q_2,\ldots,Q_\ell\}$, by our construction, it is easy to check that $P^\ast$ contains exactly one vertex of each of $G^i$ for $3\leq i\leq p$ and a triangle in $G[V(G^1)\cup V(G^2)]$. Thus $G[V(P^\ast)]$ is a complete graph on $p+1$ vertices. The proof is completed.\end{proof}

From now on, let $s=2\left\lfloor k/(p+1)\right\rfloor+j$, where $j=1$ if $k=p$ mod $p+1$ and $j=0$ otherwise.

\begin{lemma}\label{no-edge-in-G2}
Let $G = \bigotimes_{1\leq i \leq p} G^i$ and $t=\lfloor k/(p+1)\rfloor$. Let $|V(G^i)|=n_i\geq k+4$ for $i=1,2\ldots,p$.\\
(a) If $G^1$ contains a copy of $P_{s-1}$ and $\cup_{i=2}^{p}G^i$ contains  one edge,  then $G$ contains a copy of $P_k^p$.\\
(b) Let $G^\prime$ be the graph obtained from $G$ by adding  new vertices $y_1,\ldots,y_{t-1}$ with $|\cap_{j=1}^{t-1}N_{G^i}(y_j)|\geq k+4$ for $i=1,\ldots,p$.

(b.1) For even $s$, if $\cup_{i=1}^{p}G^i$ contains an  edge which is incident with $\cup_{j=1}^{t-1}N_G(y_j)$, then $G^\prime$ contains a copy of $P_k^p$.

(b.2) For odd $s$ and $i\in[p]$, if $G^i$  contains two vertex disjoint edges incident with $\cup_{j=1}^{t-1}N_G(y_j)$ or a copy of $P_3$ such that one end vertex of it is incident with $\cup_{j=1}^{t-1}N_G(y_j)$  then $G^\prime$ contains a copy of $P_k^p$.
\end{lemma}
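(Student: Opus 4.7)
The strategy parallels that of Lemma~\ref{no-two-matching}: in each case I will construct a path $W=w_1w_2\cdots w_m$ with $m\ge k$ inside $G$ (respectively $G'$) having the property that every $p+1$ consecutive vertices of $W$ induce a clique. The initial segment $w_1\cdots w_k$ then contains a copy of $P_k^p$ as a subgraph, exactly as in the last paragraph of the proof of Lemma~\ref{no-two-matching}. I would build $W$ as a chain of $\ell=\lceil k/(p+1)\rceil$ \emph{bricks}, where a brick is a $(p+1)$-vertex subpath inducing $K_{p+1}$. Because $G$ is $p$-partite, each brick contains exactly one intra-partite \emph{duplicate edge} (lying inside some $G^i$) together with one vertex from each of the remaining $p-1$ parts. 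Consecutive bricks are concatenated via a cross-partite edge in exactly the four-case-modulo-$4$ style used in the proof of Lemma~\ref{no-two-matching}, and a uniform check on the junctions confirms that every sliding $(p+1)$-window induces a $K_{p+1}$.

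For part (a), write the $P_{s-1}$ in $G^1$ as $u_1u_2\cdots u_{s-1}$ and let $xy$ be the edge in some $G^{i_0}$ with $i_0\ge 2$. The pairs $u_1u_2,u_3u_4,\dots$ supply $\lfloor (s-1)/2\rfloor$ disjoint duplicate edges inside $G^1$, and $xy$ supplies one more inside $G^{i_0}$. A short arithmetic check using $s=2\lfloor k/(p+1)\rfloor+j$ and the congruence class of $k$ modulo $p+1$ shows that this is exactly the supply of duplicates needed: either there are $\ell$ of them outright (when $j=1$, or when $j=0$ and $k\equiv 0\pmod{p+1}$), or there are $\ell-1$ of them, in which case the partial last brick's duplicate positions lie beyond position $k$ of $W$ and need not be realized. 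The consecutive edges $u_{2j}u_{2j+1}$ of the $P_{s-1}$ play no further role, and the hypothesis $|V(G^i)|\ge k+4$ for every $i$ provides enough slack to pick the remaining $p-1$ vertices of each brick freshly from each $G^i$.

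For part (b), the vertices $y_1,\dots,y_{t-1}$ of $G'$ serve as \emph{universal pivots}: since $y_j$ has at least $k+4$ common neighbors in every $G^i$, $y_j$ can be slotted into any brick as one of its $p+1$ vertices, automatically adjacent to the other $p$ vertices of that brick (which can be taken as one vertex from each $G^i$), \emph{without} consuming any intra-partite edge of $G$. Each $y_j$ therefore furnishes one brick for free, leaving $\ell-(t-1)$ bricks to be built from genuine intra-$G^i$ edges. For even $s$ (case (b.1)) this residue is a single brick, whose duplicate edge can be taken to be the hypothesized edge incident with $\bigcup_jN_G(y_j)$; the incidence is exactly what allows this last brick to be chained to the $y_j$-bricks via a cross-partite edge through the common neighbourhood. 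For odd $s$ (case (b.2)), $k=t(p+1)+p$, so the residue consists of one brick plus a tail of $p-1$ further vertices whose sliding $(p+1)$-windows demand an additional duplicate pair; this is precisely why the hypothesis is strengthened to two vertex-disjoint edges (supplying two disjoint duplicates) or a $P_3$ (whose two edges supply overlapping duplicates sharing the middle vertex) inside a single $G^i$.

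The main obstacle I expect is the bookkeeping: verifying that every sliding $(p+1)$-window along $W$ contains exactly one of the chosen duplicate edges and otherwise spans $p$ distinct parts, and that no vertex is reused across bricks. The quantitative slack $|V(G^i)|\ge k+4$ and $|\bigcap_jN_{G^i}(y_j)|\ge k+4$ is what lets the vertex selections go through without collisions.
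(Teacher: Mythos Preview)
Your brick strategy is sound and for part~(a) it is essentially the paper's own construction. There is, however, one genuine misconception in your plan: the claim that ``the consecutive edges $u_{2j}u_{2j+1}$ of the $P_{s-1}$ play no further role'' is wrong, and this is precisely where the junction verification would fail. When two adjacent bricks both carry their duplicate edge inside $G^1$, some sliding $(p+1)$-window straddling them contains the last $G^1$-vertex of one brick and the first $G^1$-vertex of the next. Concretely, if brick $i$ is $z_p^i,\ldots,z_2^i,u_{2i-1},u_{2i}$ and brick $i{+}1$ is $z_p^{i+1},\ldots,z_2^{i+1},u_{2i+1},u_{2i+2}$, then the window $u_{2i},z_p^{i+1},\ldots,z_2^{i+1},u_{2i+1}$ has two $G^1$-vertices $u_{2i},u_{2i+1}$, and its being a clique requires exactly the edge $u_{2i}u_{2i+1}$. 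The paper's explicit path for (a) uses the entire $P_{s-1}$ for this reason, not merely an alternating matching extracted from it; the mod-$4$ interlocking pattern of Lemma~\ref{no-two-matching} is unavailable here because almost all duplicates lie in the same part $G^1$.

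For part~(b) your approach diverges from the paper's. Rather than assembling $P_k^p$ brick by brick, the paper builds a copy of $P_s$ entirely inside the single ``augmented class'' $\{y_1,\ldots,y_{t-1}\}\cup V(G^i)$ by alternating the $y_j$'s with vertices from $\bigcap_j N_{G^i}(y_j)$ and attaching the hypothesised edge(s) at the ends; since every vertex of this $P_s$ has at least $k$ common neighbours in each remaining $G^\ell$, Proposition~\ref{decompositionofpowerpath} gives $P_k^p\subseteq G'$ at once. Your brick route should also succeed (here the junctions between $y$-bricks are unproblematic because each $y_j$ effectively acts as its own colour class), and it has the merit of being uniform with~(a); but note that your statement ``the residue is a single brick'' in (b.1) is literally true only when $k\equiv 0\pmod{p+1}$, and otherwise you again need the partial-last-brick argument you already invoke in~(a).
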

\begin{proof} $(a)$ Let $x_1x_2\ldots x_{s-1}$ be a path on $s-1$ vertices in $G^1$. Without lose of generality, let $y_1y_2$ be an edge in $G^2$. We will find a copy of $P^p_{(p+1)s}$. Let $z_i^j\in G^i\setminus\{y_1,y_2\}$ for $j=1,\ldots,s$ and $z_1\in G^1\setminus\{x_1,x_2,\ldots,x_{s-1}\}$. For odd $s$, let
$$P_{(p+1)(t+1)}=z_p^1\ldots z_2^1 x_1x_2z_p^2\ldots z_2^2x_3x_4\ldots \ldots z^{t}_p\ldots  z_2^{t}x_{s-2}x_{s-1}   z_p^{t+1}\ldots z_3^{t+1}y_1y_2z_1.$$
For even $s$, let
$$P_{(p+1)s}=z_p^1\ldots z_2^1 x_1x_2z_p^2\ldots z_2^2x_3x_4\ldots \ldots z^{t}_p\ldots z_3^{t}y_1x_{s-1}y_2.$$
In both cases, it is not hard to see that, for each $(p+1)$-vertex sub-path $P^\ast$ of $P_{(p+1)s}$, $G[V(P^\ast)]$ is a complete graph. We find a copy of $P_{(p+1)(t+1)}$ or $P^p_{(p+1)s}$  and hence a copy of $P^p_{k}$.

$(b)$ We only prove this case for odd $s$, since the proof is essentially the same when $s$ is even. If $G^i$ contains two vertices disjoint edges, say $e_1$ and $e_2,$ since $|\cap_{j=1}^{t-1}N_{G^i}(y_j)|\geq k+4$, there is a path on $s$ vertices in $G^\prime$ starting from $e_1$, go through $V(G^i)$ and $\{y_1,\ldots,y_{t-1}\}$ alternately, ending at $e_2$. Moreover, since $|\cap_{j=1}^{t-1}N_{G^\ell}(y_j)|\geq k+4$ for $\ell\neq i$, the vertices of this path has at least $k$ common neighbours in $G^\ell$ for $\ell\neq i$. Thus by Proposition~\ref{decompositionofpowerpath}, $G^\prime$ contains a copy of $P_k^p$. If $G_i$ contains a copy of $P_3$ such that one end vertex of it is incident with $\cup_{j=1}^{t-1}N_G(y_j)$, the result follows similarly.\end{proof}


The following well-know lemma (see Corollary~4.3 in Chapter 6  of \cite{bollobas1978}) proved by Erd\H{o}s \cite{Erdos1969} and Simonovits \cite{Simonoivts1968} is a powerful tool in extremal graph problems.
\begin{lemma}(Erd\H{o}s \cite{Erdos1969} and Simonovits \cite{Simonoivts1968})\label{minmum degree}
Let $H$ be a graph with $\chi(H)=p+1\geq 3$. If $S_n$ is an extremal graph for $H$ on $n$ vertices, then $\delta(S_n)=(1-1/p)n+o(n)$.
\end{lemma}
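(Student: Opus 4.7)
The plan is to prove the two matching bounds $\delta(S_n) \leq (1-1/p)n + o(n)$ and $\delta(S_n) \geq (1-1/p)n - o(n)$ separately.

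The upper bound is immediate from (\ref{E-S-S}): since $e(S_n) = \mbox{ex}(n, H) = (1 - 1/p)\binom{n}{2} + o(n^2)$, the average degree of $S_n$ equals $(1 - 1/p)(n-1) + o(n)$, and the minimum degree cannot exceed the average.

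For the lower bound I would argue by contradiction. Assume there exist a constant $\epsilon > 0$ and arbitrarily large $n$ for which $S_n$ contains a vertex $v$ with $d(v) \leq (1 - 1/p - \epsilon)n$. The strategy is to delete $v$ from $S_n$ and re-insert a new vertex $v^\ast$ whose neighborhood is substantially larger, while preserving $H$-freeness, so as to strictly increase the edge count and contradict extremality. To choose $N(v^\ast)$, I would invoke the Erd\H{o}s--Simonovits stability theorem (a quantitative strengthening of (\ref{E-S-S})): for large $n$, the extremal graph $S_n$ lies within $o(n^2)$ edges of the Tur\'an graph $T(n,p)$, so $V(S_n)$ admits a balanced $p$-partition $V_1 \cup \cdots \cup V_p$ for which all but $o(n^2)$ edges cross classes, and all but $o(n)$ ``good'' vertices of each class are adjacent to nearly every good vertex outside their own class. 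Letting $V_i$ be the class containing $v$ and taking $v^\ast$ adjacent to the good vertices of $\bigcup_{j \neq i} V_j$, we obtain $|N(v^\ast)| \geq (1 - 1/p)(n-1) - o(n)$, so the new graph $G^\ast$ satisfies
\[
e(G^\ast) \geq e(S_n) - (1 - 1/p - \epsilon)n + (1 - 1/p)(n-1) - o(n) > e(S_n),
\]
which is the desired contradiction provided $G^\ast$ is $H$-free.

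The main obstacle is verifying this $H$-freeness. Any copy of $H$ in $G^\ast$ must use $v^\ast$, and mapping $v^\ast$ to some $h \in V(H)$ would produce an embedding of $H - h$ into $N_{G^\ast}(v^\ast)$. By construction $N_{G^\ast}(v^\ast)$ lies in $p-1$ of the stability classes and uses only good vertices, so it induces an almost-$(p-1)$-partite graph whose exceptional within-class edges are too few and too dispersed to realize any member of the decomposition family $\mathcal{M}(H)$ --- which, by definition, is precisely the set of configurations required to extend a Tur\'an-like block into a copy of $H$. Making this quantitative from the stability partition is routine and yields $\delta(S_n) \geq (1 - 1/p)n - o(n)$, completing the proof.
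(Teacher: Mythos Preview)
The paper does not prove this lemma: it is quoted as a classical result of Erd\H{o}s and Simonovits (with a pointer to Corollary~4.3 in Chapter~6 of Bollob\'as's book) and used as a black box in the proof of Theorem~\ref{main}. So there is no in-paper argument to compare against.

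Your upper bound is fine, and the lower-bound strategy (stability plus vertex replacement) is a standard and workable route. But the $H$-freeness check, as you wrote it, has a real gap. You claim that a copy of $H$ in $G^\ast$ through $v^\ast$ ``would produce an embedding of $H-h$ into $N_{G^\ast}(v^\ast)$''. That is false: only the \emph{neighbours} of $h$ in $H$ are forced into $N_{G^\ast}(v^\ast)$; the remaining vertices of $H-h$ may land anywhere in $G^\ast-v^\ast=S_n-v$, in particular in the class $V_i$ you excluded and among bad vertices. So the decomposition-family remark about $N_{G^\ast}(v^\ast)$ being almost $(p-1)$-partite does not control the situation. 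A clean repair is to choose $N(v^\ast)$ not as ``all good vertices outside $V_i$'' but as $\bigcap_{j=1}^{|V(H)|} N_{S_n}(u_j)$ for fixed good vertices $u_1,\dots,u_{|V(H)|}$ lying in a single class; this intersection still has size $(1-1/p)n-o(n)$, and any copy of $H$ in $G^\ast$ using $v^\ast$ must omit some $u_j$, so replacing $v^\ast$ by $u_j$ (legal since $N(v^\ast)\subseteq N(u_j)$) produces a copy of $H$ inside $S_n-v\subseteq S_n$, a contradiction.
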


Now we are ready to prove our main theorem.

\medskip

\noindent{\bf Proof of Theorem~\ref{main}:} Let $S_n$ be an extremal graph of $P_k^p$ on $n$ vertices. It follows from the definition of decomposition family and Proposition~\ref{decompositionofpowerpath} that
\begin{equation}\label{the lower bound}
e(S_n)\geq \max\left\{\mbox{ex}(n_0,P_s)+n_0t(n_1,p):n_0+n_1=n\right\}.
\end{equation}
Thus by Theorem~\ref{extremal for path}, a simple calculation shows that
 \begin{equation}\label{the lower bound 1}
e(S_n)\geq t(n,p)+\frac{(s-2)n}{2p}+o(n).
\end{equation}

 Apply Theorem~\ref{family-extremal-graphs} for $S_n$, after deleting at most $r$ vertices of $S_n$ the result graph is a graph product $G^\prime=\otimes_{i=1}^{p}G^i$. Moreover, each component of $G^i$ is on at most $r$ vertices. Let $D$ be the set of deleted vertices. By Lemma~\ref{no-two-matching}, we may assume $G^i$ contains an independent set of size $n/p+o(n)$ for $i=2,\ldots,p$. In other words, the number of the non-isolated vertices in $G^i$ is at most $(k-1)r$ for $i=2,\ldots,p$. We divide $D$ into the following sets: if $x\in D $ is joint to at most $o(n)$ vertices of $G_i$ for same $i$, then let it belongs to $ D_i$, otherwise, let it belongs to $D_1$. Let $A_i=V(G^i)\cup D_i$ for $i=1,\ldots,r$. By Theorem~\ref{family-extremal-graphs}, isolated vertices of $G^i$ are symmetric subgraphs of $S_n$. Thus, each vertex of $A_1\cup D$ is  adjacent to at least $n/p+o(n)$ vertices of $G^i$ for $i=2,\ldots,p$ and by Lemma~\ref{minmum degree}, each vertex of $A_i$ is adjacent to $n/p+o(n)$ vertices of $A_j\neq i$ for $i=2,\ldots,p$.

\medskip

{\bf Claim 1.} $S_n[A_i]$ does not contain a copy of $P_{s}$ for $i=1,\ldots,p$.

\medskip

\begin{proof} Since any $s$ vertices of $A_i$ has at least $n/p+o(n)$ common neighbours in $A^{j\neq i}$, there is a copy of $T(kp,p)$ in the common neighbours of any $s$ vertices of $A_i$. Thus $S_n[A_1]$ can not contain a copy of $P_s$. Otherwise, it follows from the definition of decomposition family and Proposition~\ref{decompositionofpowerpath} that $S_n$ contains a copy of $P_k^p$, a contradiction. The proof is completed.\end{proof}

{\bf Claim 2.} There exist an $j\in [p]$ such that $S_n[A_{i\neq j}]$ is an independent set.

\medskip

\begin{proof}  Since $|D|\leq r$ and the number of non-isolated vertices in $G^i$ is at most $r(k-1)$ for $i=2,\ldots,p$. By (\ref{the lower bound}), we have $e(S_n[A_1]\geq \mbox{ex}(|A_1|,P_s)-((k-1)(p-1)+1){r \choose 2}$. Thus by Lemma~\ref{stablity for non-connecte}, $S_n[A_1]$ contains a copy of $K_{t-1,\ell}$ or $\ell K_{s-1}$, where $\ell=\Theta(n)$.

\medskip

$(A)$ $S_n[A_1]$ contains a copy of $\ell K_{s-1}$. Suppose that, without lose of generality, $A_2$ contains an edge. Since the common neighbours of any $k$ vertices in $A_2$ in $A_{i\neq 2}$ is at least $n/p+o(n)$ and $\ell=\Theta(n)$, there is a large Tur\'{a}n $T(m,p)$ on $m$ vertices in $S_n$ such that one class of it contains a copy of $K_{s-1}$ and another class of it contains an edge. Thus by Lemma~\ref{no-edge-in-G2}, $G$ contains a copy of $P_k^p$, a contradiction.

\medskip

$(B)$ $S_n[A_1]$ contains a copy of $K_{t-1,\ell}$. Then there are $t-1$ vertices, say $d_1,d_2,\ldots, d_{t-1}$, such that the common neighbours of them in $A_i$ is at least $\Theta(n)$ for $i=1,\ldots, p$. Let $D^\prime=\{d_1,\ldots,d_{t-1}\}$ and $\ell_i=\cup_{j=1}^{t-1}N_{S_n[A_i]}(d_j)$. We consider the following two subcases:

$(B.1)$ $s$ is even. Then by Lemma~\ref{no-two-matching} $(a)$, since $|\cap_{j=1}^{t-1} N_{S_n}[A_i](d_j)|\geq k$ for $i=1,\ldots,p$, there is no edge in $S_n[A_i]$ which is incident with $\cup_{i=1}^{t-1}N_{S_n}(d_i)$. By Claim 1, we have  $e(S_n[A_1])\leq (t-1)\ell_1+\mbox{ex}(\ell_1,P_s)$ and $e(S_n[A_i\cup D^\prime])\leq (t-1)\ell_i+\mbox{ex}(\ell_i,P_s)$ for $i=2,\ldots,p$. Let $B_i= A_i\setminus \cup _{j=1}^{t-1}N_{S_n}(d_j)$. On the other hand, by (\ref{the lower bound}), we must have the following:
\begin{itemize}
 \item Each vertex of $A_i$ is adjacent to each vertex of $A_j$ for $2\leq i<j\leq p$.
 \item Each vertex of $A_1\setminus D^\prime$ is  is adjacent to each vertex of $A_i$ for $i=2,\ldots,p$.
 \item Each vertex of $D^\prime$ is adjacent to each vertex of $\cup _{j=1}^{t-1}N_{S_n}(d_j)$.
 \item $e(S_n[B_i])=\mbox{ex}(|B_i|,P_s)$ for each $i=1,2,\ldots,p$.
\end{itemize}
Thus without lose of generality, we have $|B_i|=0$ for $i=2,\ldots,p$. Otherwise, by Lemma~\ref{no-edge-in-G2}, $S_n$ contains a copy of $P_k^p$, a contradiction. Hence, we finish the proof of the claim in this case.

$(B.2)$ $s$ is odd. By Lemma~\ref{no-two-matching} $(b.2)$ the edges in $A_1\setminus D^\prime$ or $A_i$ for $i=2,\ldots,p$ which are incident $\cup_{i=1}^{t-1}N_{S_n}(d_i)$ can only be incident with  one vertex of $\cup_{i=1}^{t-1}N_{S_n}(d_i)$.
Without lose of generality, let $e_1$, $e_2$, \ldots, $e_m$ be those edges which belong to $A_1$. By Lemma~\ref{no-two-matching} (b.2) again, the is no edges incident with those $m$ edges. Note that each vertex of $D^\prime$ is adjacent to $n/p+o(n)$ vertices of $A_i$ for $i=2,\ldots,p$, we have
\begin{align*}
e(S_n)&= \sum_{i=1}^{p}e(A_i)+ \prod_{1\leq i<j\leq p} |A_i| |A_j|\\
&\leq \mbox{ex}(|A_1|-\ell_1-m,P_s)+(t-1)\ell_1+m_1+\prod_{1\leq i<j\leq p} |A_i| |A_j|  +o(n)\\
&= \mbox{ex}(|A_1|,P_s)     -\Theta(n)+\prod_{1\leq i<j\leq p} |A_i| |A_j|+o(n)\\
&=t(n,p)+\frac{(s-2)}{2p}n-\Theta(n),
\end{align*}
a contradiction to (\ref{the lower bound 1}). The proof of the claim is thus completed.\end{proof}

By (\ref{the lower bound}), the theorem follows from Claims 1 and 2 directly.\rule{3mm}{3mm}\par\medskip

\section{Conclusion}

In \cite{Xiao}, Xiao, Katona, Xiao and Zamora determined ex$(n,P_5^2)$ and ex$(n,P_6^2)$ for all values of $n$ by traditional induction method. One may hope to determine ex$(n,P_k^p)$ for all values of $n,k,p$. But it seems hard to get the exact value of ex$(n,P_k^p)$. This is because that if $k$ becomes large, then the extremal graphs for $P_k^p$ when $n$ is small are different from the extremal graphs for $P_k^p$ when $n$ is large. For example, when $n=k$ and $k$ is a large constant, the extremal graphs in our theorem is not extremal graphs for $P_k^p$. To see this, let $H$ be the graph obtained from taking a $K_{k-1}$ and joining a new vertex to $p-1$ vertices of $K_{k-1}$, clearly this graph does not contain a copy of $P_k^p$ and has more edges than the extremal graphs in our theorem. Thus when $k$ is large, the induction base seems hard to be proved. Our theorem applies a deep theorem of Simonovits which is proved by {\it the progressive induction.} The advantage of the progressive induction is that we do not need the induction base. For more information, we refer the readers to \cite{Simonoivts1968}. Anyhow, it is interesting to determine ex$(n,P_k^p)$ for all values of $n,k,p$.

Theorem~\ref{extremal-in-D(npr)} states that if $\mathcal{M}(H)$ contains a linear forest, then $\mathcal{D}(n,p,r)$ contains at least one extremal graph for $H$ provided $n$ is sufficiently large. Our theorem shows that there are extremal graphs which are not in $\mathcal{D}(n,p,r)$ (this is observed by Theorem~\ref{extremal for path}.). As we know, this is the first case that the extremal graphs are not all contained in the family $\mathcal{D}(n,p,r)$ when the decomposition family of the forbidden graph contains a linear forest. Hence it is interesting to ask the following question.

\medskip

{\bf Question.} For a given graph $H$, if $\mathcal{M}(H)$ contains a linear forest, under which condition the extremal graphs are not all belong to the graph family $\mathcal{D}(n,p,r)$.

\medskip

The stability result of extremal graph problem is important not only for the result itself but also for solving extremal graph problem. Our main theorem is proved by applying the classic stability results for paths. As far as I know, this may be the first extremal graph result  which is obtained by this way.

\end{document}